\newcommand{\ol}{\overline}
\newcommand{\mc}{\mathcal}
\newcommand{\vx}{{\bf x}}
\newcommand{\be}{\begin{equation}}
\newcommand{\ee}{\end{equation}}
\newcommand{\li}{\left<}
\newcommand{\ri}{\right>}
\newtheorem{theorem}{Theorem}[section]
\newtheorem{lemma}[theorem]{Lemma}
\theoremstyle{definition}
\theoremstyle{remark}
\numberwithin{equation}{section}
\begin{document}
\setlength{\baselineskip}{1.2\baselineskip}

\title[The prescribed curvature problem]
{A note on starshaped compact hypersurfaces with prescribed scalar curvature in space forms}

\author{Joel Spruck}
\address{Department of Mathematics, Johns Hopkins University,
 Baltimore, MD, 21218}
\email{js@math.jhu.edu}
\thanks{Research  supported in part by the NSF}
\author{Ling Xiao}
\address{Department of Mathematics, Rutgers University,
 Piscataway, NJ, 08854}
\email{lx70@math.rutgers.edu}

\begin{abstract}
In \cite{GRW}, Guan, Ren and  Wang obtained a $C^2$ a priori estimate
for admissible 2-convex hypersurfaces satisfying the Weingarten curvature equation $\sigma_2(\kappa(X))=f(X, \nu(X)).$
In this note, we give a simpler proof of this result, and extend it to space forms.
\end{abstract}

\maketitle
\section{Introduction}
\label{sec0}
\setcounter{equation}{0}
In \cite{GRW}, Guan,  Ren and  Wang solved the long standing problem of obtaining global $C^2$ estimates for  a closed convex hypersurface $M\subset \mathbb R^{n+1}$ of prescribed kth elementary symmetric function of curvature  in general form:
\be\label{i.1}
\sigma_k(\kappa(X))=f(X, \nu(X)),\,\forall X\in M.
\ee
In the case $k=2$ of scalar curvature, they were able to prove the estimate for strictly starshaped 2-convex hypersurfaces. Their proof relies on  new test curvature functions and elaborate analytic arguments to overcome the difficulties caused by allowing $f$ to depend of $\nu$.

In this note, we give a simpler proof for the scalar curvature case and we extend the result to
space forms $N^{n+1}(K),$ with $K=-1, 0, 1$. Our main result is stated in Theorem \ref{pt1} of section \ref{sec1} and leads to the existence Theorem \ref{thm0}. For related results in the literature see \cite{CNS}, \cite{GLM}, \cite{BLO} and \cite{JL}.

\section{Prescribed scalar curvature}
\label{sec1}
\setcounter{equation}{0}

Let $N^{n+1}(K)$ be a space form of sectional curvature $K=-1,\,0,$ and $+1.$ Let $g^N := ds^2$ denote the Riemannian metric of $N^{n+1}(K).$
In Euclidean space $\mathbb{R}^{n+1},$ fix the origin $O$ and let $\mathbb{S}^n$ denote the unit sphere centered at $O.$
Suppose that $(z, \rho)$ are spherical coordinates in $\mathbb R^{n+1},$ where $z\in\mathbb S^n.$ The standard metric on $S^n$ induced from
$\mathbb{R}^{n+1}$ is denoted by $dz^2.$ Let $a$ be constant, $0<a\leq\infty,$ $I=[0, a),$ and $\phi(\rho)$ a positive function on I. Then the new metric
 \be\label{p.1}
 g^N :=ds^2=d\rho^2+\phi^2(\rho)dz^2.
 \ee
on  $\mathbb{R}^{n+1}$ is a model of  $N^{n+1}$ which is  Euclidean space $\mathbb R^{n+1}$ if $\phi(\rho)=\rho,$ $a=\infty,$
the unit sphere $\mathbb S^{n+1}$ if $\phi(\rho)=\sin(\rho),$ $a=\pi/2$
and  hyperbolic space $\mathbb H^{n+1}$ if $\phi(\rho)=\sinh(\rho),$ $a=\infty.$

We recall some formulas for the induced metric, normal, and second fundamental form on $\mc{M}$ (see \cite{BLO}).
We will denote by $\nabla'$ the covariant derivatives with respect to the standard spherical metric $e_{ij},$ and by $\nabla$
the covariant derivatives with respect to some local orthonormal frame on $\mc{M}.$ Then we have
\be\label{p.4}
g_{ij}=\phi^2e_{ij}+\rho_i\rho_j,\,g^{ij}=\frac{1}{\phi^2}\left(e^{ij}-\frac{\rho^i\rho^j}{\phi^2+|\nabla'\rho|^2}\right),
\ee
\be\label{p.5}
\nu=\frac{(-\nabla'\rho, \phi^2)}{\sqrt{\phi^4+\phi^2|\nabla'\rho|^2}},
\ee
and
\be\label{p.6}
h_{ij}=\frac{\phi}{\sqrt{\phi^2+|\nabla'\rho|^2}}
\left(-\nabla'_{ij}\rho+\frac{2\phi'}{\phi}\rho_i\rho_j+\phi\phi'e_{ij}\right).
\ee

Consider the vector field $V=\phi(\rho)\frac{\partial}{\partial\rho}$ in $N^{n+1}(K),$ and define
$\Phi(\rho)=\int_0^\rho\phi(r)dr.$ Then, $u :=\langle V, \nu \rangle$ is the support function. By a straight forward calculation
we have the following equations (see \cite{GL} lemma 2.2 and lemma 2.6).
\be\label{p.7}
\nabla_{ij}\Phi=\phi'g_{ij}-uh_{ij},
\ee
\be\label{p.8}
\nabla_iu=g^{kl}h_{ik}\nabla_l\Phi,
\ee
and
\be\label{p.9}
\nabla_{ij}u=g^{kl}\nabla_kh_{ij}\nabla_l\Phi+\phi'h_{ij}-ug^{kl}h_{ik}h_{jl}.
\ee

Now let $\Gamma_k$ be the connected component of $\{\lambda\in\mathbb R^n : \sigma_k(\lambda)>0\},$
where \[\sigma_k=\sum_{i_1<i_2<\cdots<i_k}\lambda_{i_1}\cdots\lambda_{i_k}\] is the k-th mean curvature.
$\mathcal{M} :=\{(z, \rho(z)) : z\in\mathbb S^n\}$ is an embedded hypersurface in $N^{n+1}.$ We call
$\rho$ \textit{k-admissible} if the principal curvatures $(\lambda_1(\rho(z)), \dots, \lambda_n(\rho(z)))$ of
$\mathcal{M}$ belong to $\Gamma_k.$ Our problem is to study a smooth positive 2-admissible function
$\rho$ on $\mathbb S^n$ satisfying
\be\label{p.2}
\sigma_2(\lambda(b))=\psi(V,\nu ),
\ee
where $b=\{b_{ij}\}=\{\gamma^{ik}h_{kl}\gamma^{lj}\},$ $\{h_{ij}\}$ is the second fundamental form of $\mathcal{M},$ and
$\gamma^{ij}$ is $\sqrt{g^{-1}}.$
Equivalently, we  study the solution of the following equation
\be\label{p.3}
F(b)= \left( \begin{array}{c}n\\2 \end{array} \right)^{(-1/2)}\sigma_2(\lambda(b))^{1/2}
=f(\lambda(b_{ij}))=\overline{\psi}(V,\,\nu).
\ee

Now we are ready to state and prove our main result.
\begin{theorem}\label{pt1}
Suppose $\mc{M}=\{(z,\,\rho(z))\mid z\in\mathbb S^n\}\subset N^{n+1}$ is a closed 2-convex hypersurface which is strictly starshaped  with respect to the origin and satisfies equation \eqref{p.3} for some positive function $\ol\psi(V,\,\nu)\in C^2(\Gamma),$
where $\Gamma$ is an open neighborhood of the unit normal bundle of $\mc{M}$ in $N^{n+1}\times\mathbb S^n.$
Suppose also we have uniform control $0<R_1\leq \rho(z)\leq R_2<a,\, |\rho|_{C^1}\leq R_3$. Then there is a constant $C$ depending only on $n,R_1,R_2,R_3$  and $|\bar{\psi}|_{C^2},$
such that
\be\label{p.10}
\max_{z\in\mathbb{S}^n}|\kappa_i(z)|\leq C.
\ee
\end{theorem}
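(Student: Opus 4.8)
The plan is to deduce the estimate from an a priori upper bound for the largest principal curvature $\kappa_1:=\max_i\kappa_i$, obtained by a maximum principle argument. For the reduction: along $\mc M$ the pair $(V,\nu)=(\phi(\rho)\p_\rho,\nu)$ ranges over a compact subset of $\Gamma$, since $\rho\in[R_1,R_2]$ and $|\nu|=1$, so $\ol\psi$ is bounded between two positive constants on $\mc M$ and, with its derivatives up to order two, is controlled by $|\ol\psi|_{C^2}$; by \eqref{p.3} the scalar curvature $\sigma_2(\kappa)=\binom{n}{2}\ol\psi^2$ is then pinched, $0<c_0\le\sigma_2(\kappa)\le C_1$. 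Since $\kappa\in\Gamma_2$ we have $\sigma_1(\kappa)>0$ and $\sigma_1(\kappa|i)>0$ for every $i$, so the linearized operator below is elliptic, and an elementary argument in $\Gamma_2$ gives $|\kappa_i|\le C\sigma_1(\kappa)$ for all $i$; with $|A|^2=\sigma_1(\kappa)^2-2\sigma_2(\kappa)$ this shows it is enough to bound $\kappa_1$ from above. (The $C^1$ bound on $\rho$ is assumed, so no gradient estimate is needed.)

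Write $F=\binom{n}{2}^{-1/2}\sigma_2^{1/2}$, $F^{ij}=\p F/\p h_{ij}$, $L=F^{ij}\nabla_{ij}$; $F$ is concave on $\Gamma_2$ and $(F^{ij})>0$. I would run the maximum principle on $\varphi=\log\kappa_1-N\log u$ with $N\gg1$ to be chosen (if $\kappa_1$ is not simple at the maximum one perturbs in the standard way, or uses $\log\sigma_1$, which is smooth and equivalent). At a maximum point $x_0$, in an orthonormal frame diagonalizing $h$ at $x_0$, one has $\nabla\varphi(x_0)=0$ and $L\varphi(x_0)\le0$. Differentiating \eqref{p.3} once gives $F^{ij}\nabla_kh_{ij}=\nabla_k\ol\psi$ and twice gives $F^{ij}\nabla_{11}h_{ij}=\nabla_{11}\ol\psi-F^{ij,rs}\nabla_1h_{ij}\nabla_1h_{rs}$. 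Inserting these into the formula for the second covariant derivatives of the top eigenvalue, commuting derivatives (the Simons identity, whose commutator contributes in $N^{n+1}(K)$ only $K$ times terms linear in $h$), and using concavity of $F$ --- retaining the lower bound $-F^{ij,rs}\nabla_1h_{ij}\nabla_1h_{rs}\ge\tfrac{1}{2}\binom{n}{2}^{-1/2}\sigma_2^{-1/2}\sum_{i\ne j}(\nabla_1h_{ij})^2\ge0$ and the nonnegative eigenvalue-gap terms $\tfrac{2}{\kappa_1}\sum_kF^{kk}\sum_{p>1}\frac{(\nabla_kh_{1p})^2}{\kappa_1-\kappa_p}$ --- gives a lower bound for $L(\log\kappa_1)$. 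For the $-N\log u$ part one uses \eqref{p.7}--\eqref{p.9}: since $\nabla_{ij}u=g^{kl}\nabla_kh_{ij}\nabla_l\Phi+\phi'h_{ij}-ug^{kl}h_{ik}h_{jl}$, contracting with $F^{ij}$ and using $F^{ij}\nabla_kh_{ij}=\nabla_k\ol\psi$ gives $L(-N\log u)\ge NF^{ij}g^{kl}h_{ik}h_{jl}+Nu^{-2}F^{ij}\nabla_iu\,\nabla_ju-CN(1+\kappa_1)$, where $F^{ij}g^{kl}h_{ik}h_{jl}=\sum_kF^{kk}\kappa_k^2$ (the $CN\kappa_1$ coming from the $\nu$-dependence of $\ol\psi$ in $\nabla\ol\psi$).

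The dangerous terms in $L\varphi(x_0)$ are: (a) the pure second normal derivative $\kappa_1\,d^2\ol\psi_{\nu\nu}(e_1,e_1)$ and other contributions of size $|A|^2/\kappa_1$; (b) the Simons and first-$\nu$-derivative terms, of size $\sum_kF^{kk}$, $\kappa_1$, or $|\nabla h|$; and (c) the term $-\kappa_1^{-2}F^{ij}\nabla_i\kappa_1\nabla_j\kappa_1$ of $L(\log\kappa_1)$ which, after substituting $\nabla_k\kappa_1=N\kappa_1u^{-1}\nabla_ku$ from $\nabla\varphi(x_0)=0$, becomes $-N^2u^{-2}F^{ij}\nabla_iu\,\nabla_ju$, of the same type as the gain $NF^{ij}g^{kl}h_{ik}h_{jl}$. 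The heart of the proof --- and the place where it streamlines \cite{GRW} --- is a careful accounting of these against the coercive quantities available in $\Gamma_2$: using the Newton--Maclaurin inequalities with $c_0\le\sigma_2\le C_1$ one obtains the key bounds $F^{11}\kappa_1\ge c_1>0$ (hence $\sum_kF^{kk}\kappa_k^2\ge F^{11}|A|^2\ge c_1\kappa_1$) and $\sum_kF^{kk}\ge c'\kappa_1$; the first-order gradient terms in (b) are absorbed, by Cauchy--Schwarz, into the sharp concavity term above together with the eigenvalue-gap terms and the first-order identity $\sum_iF^{ii}\nabla_kh_{ii}=\nabla_k\ol\psi$; and the term in (c) is controlled by not eliminating $\nabla_k\kappa_1$ prematurely but pairing it with those same positive gradient terms. (It is convenient to separate the case in which a second principal curvature is comparable to $\kappa_1$, where the coercive terms are correspondingly larger and leave a surplus.) For a suitable $N$ this yields $0\ge L\varphi(x_0)\ge c\,\kappa_1(x_0)-C$, so $\kappa_1(x_0)\le C$; since $N\log u$ is a priori bounded, $\max_{\mc M}\kappa_1\le C$, which by the first step gives \eqref{p.10}.
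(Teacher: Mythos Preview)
Your overall plan (reduce to an upper bound for $\kappa_1$ via a maximum principle, differentiate the equation, commute covariant derivatives, and exploit the concavity of $F$) is the right architecture, and several of your side observations are correct and useful: in $\Gamma_2$ one always has $\sigma_1>\kappa_1$ (since $\sigma_1^2-2\sigma_2=\sum\kappa_i^2\ge\kappa_1^2$), hence $\sum_iF^{ii}\ge c'\kappa_1$, and the Andrews--Gerhardt inequality is indeed the right tool for the ``large $f_j$'' indices. The gap is in the choice of test function and in the handling of the quadratic gradient term.

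With $\varphi=\log\kappa_1-N\log u$ and $N\gg1$, the critical-point relation $\nabla_i h_{11}/\kappa_1=N\,u_i/u$ turns the bad term $-\kappa_1^{-2}\sum_iF^{ii}(\nabla_ih_{11})^2$ into $-N^2u^{-2}\sum_iF^{ii}u_i^2$, while $L(-N\log u)$ only contributes $+Nu^{-2}\sum_iF^{ii}u_i^2$ and $+N\sum_iF^{ii}\kappa_i^2$. Since $u_i=\kappa_i\Phi_i$ with $|\Phi_i|$ bounded but not small, the net quadratic deficit is of order $N^2\sum_iF^{ii}\kappa_i^2$ against a gain of order $N\sum_iF^{ii}\kappa_i^2$: for large $N$ the loss dominates. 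Neither the concavity term $-\kappa_1^{-1}F^{ij,kl}\nabla_1h_{ij}\nabla_1h_{kl}$ nor the eigenvalue-gap term helps for the index $i=1$, because both involve only $\nabla_jh_{11}$ with $j\ge2$ or $\nabla_1h_{1p}$ with $p\ge2$, never $\nabla_1h_{11}$; so your phrase ``pairing with those same positive gradient terms'' cannot close that direction. A parallel issue arises at the linear level: the $\nu$-dependence of $\ol\psi$ in $Lu$ produces a bad $-CN\kappa_1$ (via $(\nabla_s\ol\psi)\Phi_s$), with $C$ depending on $|\ol\psi|_{C^1}$, and this need not be beaten by $(N-1)c_1\kappa_1$ since $c_1$ depends only on $\inf\ol\psi$.

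The paper avoids both problems by taking $\log h_{11}-\log(u-a)+\beta\Phi$: the factor in front of the quadratic $u$-gradient stays equal to $1$ (so after the $(1+\epsilon)$-split it is almost absorbed, the $\epsilon$-excess going into $\tfrac{a}{u-a}\sum f_i\kappa_i^2$), while the additive $\beta\Phi$ contributes the large linear term $\beta\phi'\sum_iF^{ii}\ge c\beta\kappa_1$ without any quadratic counterpart (because $\nabla_{ii}\Phi=\phi'g_{ii}-uh_{ii}$ is linear in $h$). The argument is then closed by a genuine case split: Case~A ($\kappa_n\le-\kappa_1/n$), where $\sum f_i\kappa_i^2\ge cT\kappa_1^2$ directly; and Case~B ($\kappa_n>-\kappa_1/n$), where one partitions the indices into $I=\{j:f_j\le n^2f_1\}$ and $J=\{j:f_j>n^2f_1\}$, uses Andrews--Gerhardt only for $j\in J$ to kill $\kappa_1^{-2}f_j(\nabla_jh_{11})^2$, and for $i\in I$ uses the critical-point relation (now producing only $C\beta^2f_1$, not $C\beta^2T$) together with $\beta\phi'\sum f_i$ and $\tfrac{a}{C}f_1\kappa_1^2$ to conclude. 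Your sketch gestures at a related dichotomy but does not supply this mechanism; without it (and with $-N\log u$ in place of $-\log(u-a)+\beta\Phi$) the inequality $L\varphi(x_0)\ge c\kappa_1-C$ does not follow.
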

\begin{proof}
Since $\sigma_1(\kappa)>0$ on $\mc{M}$, it suffices to estimate from above, the largest principal curvature  of $\mc{M}$. Consider
 \[M_0=\max_{\vx \in \mc{M}}e^{\beta\Phi}\frac{\kappa_{\max}}{u-a}~,\]
  where $u \geq 2a$ and $\beta$ is a large constant to be chosen (we will always assume $\beta \phi'+K>0$).
  Then $M_0$ is achieved  at $\vx_0=(z_0,\,\rho(z_0))$
and we may choose a local orthonormal frame $e_1, \dots, e_n$ around $\vx_0$ such that  $h_{ij} (\vx_0) = \kappa_i \delta_{ij}$,
where $\kappa_1, \ldots, \kappa_n$ are the principal curvatures of $\Sigma$
at $\vx_0$. We may assume
$\kappa_1 = \kappa_{\max} (\vx_0)$.
Thus at $\vx_0,\, \log{h_{11}}-\log{(u-a)}+ \beta\Phi$
has a local maximum.  Therefore,
\be\label{p.11}
0=\frac{\nabla_ih_{11}}{h_{11}}-\frac{\nabla_iu}{u-a}+\beta\Phi_i,
\ee
and
\be\label{p.12}
0\geq\frac{\nabla_{ii}h_{11}}{h_{11}}-\left(\frac{\nabla_ih_{11}}{h_{11}}\right)^2
-\frac{\nabla_{ii}u}{u-a}+\left(\frac{\nabla_iu}{u-a}\right)^2+\beta\Phi_{ii}.
\ee
By the Gauss and Codazzi equations, we have
$\nabla_k h_{ij}=\nabla_j h_{ik}$ and
\be\label{codazzi}
\nabla_{11}h_{ii}=\nabla_{ii}h_{11}+h_{11}h_{ii}^2-h_{11}^2h_{ii}+K(h_{11}\delta_{1i}\delta_{1i}-h_{11}\delta_{ii}+h_{ii}-h_{i1}\delta_{i1}).
\ee
Therefore,
\be\label{p.13}
\begin{aligned}
F^{ii}\nabla_{11}h_{ii}&=F^{ii}\nabla_{ii}h_{11}+\kappa_1\sum_if_i\kappa_i^2
-\kappa_1^2\sum f_i\kappa_i+K\left(-\kappa_1\sum_if_i+\sum_if_i\kappa_i\right)\\
&=\sum_if_i\nabla_{ii}h_{11}+\kappa_1\sum_if_i\kappa_i^2-\ol{\psi}\kappa_1^2+K\left(-\kappa_1\sum_if_i+\ol{\psi}\right)\\
\end{aligned}
\ee

Covariantly differentiating equation \eqref{p.3}  twice yields
\be\label{p.1'}
F^{ii}h_{iik}=\bar{\psi}_V(\nabla_{e_k}V)+h_{ks}\bar{\psi}_{\nu}(e_s)\\
\ee
so that
 \be \label{p.1''}
 |\sum_i f_i h_{iis}\Phi_s| \leq C(1+\kappa_1)
\ee
and
\be\label{p.2'}
\begin{aligned}
F^{ii}h_{ii11}+F^{ij,kl}h_{ij1}h_{kl1}&=\nabla_{11}(\ol{\psi})
\geq-C(1+\kappa_1^2)+h_{11s}\bar{\psi}_{\nu}(e_s)\\
&\geq -C(1+\kappa_1^2 +\beta \kappa_1) \hspace{.1in}\text{(using \eqref{p.11})}.
\end{aligned}
\ee

Combining \eqref{p.2'} and \eqref{p.13} and using
\eqref{p.7}, \eqref{p.8},\eqref{p.9},\eqref{p.11},\eqref{p.12}, \eqref{p.1'},\eqref{p.1''} gives
\be\label{p.14}
\nonumber
\begin{aligned}
0&\geq\frac1{\kappa_1}\left\{-C(1+\kappa_1^2+\beta \kappa_1)-F^{ij,kl}\nabla_1h_{ij}\nabla_1h_{kl}-\kappa_1\sum f_i\kappa_i^2
+\kappa_1^2 \ol{\psi}-K(-\kappa_1\sum f_i+\ol{\psi})\right\}\\
&-\frac{1}{\kappa_1^2}\sum f_i|\nabla_ih_{11}|^2-\frac{1}{u-a}\sum f_i\{h_{iis}\Phi_s-u\kappa_i^2+\phi'\kappa_i\}
+\sum f_i\frac{|\nabla_iu|^2}{(u-a)^2}-u\beta\ol{\psi}+\beta\phi'\sum f_i\\  %
&\geq -C(\kappa_1+\beta )-\frac1{\kappa_1}F^{ij,kl}\nabla_1h_{ij}\nabla_1h_{kl}+\frac{a}{u-a}\sum f_i\kappa_i^2+(\beta \phi'+K)\sum f_i\\ %
&-\frac{1}{\kappa_1^2}\sum f_i|\nabla_ih_{11}|^2
+\sum f_i\frac{|\nabla_iu|^2}{(u-a)^2}\\
\end{aligned}
\ee
In other words,
\be\label{p.16}
\begin{aligned}
0&\geq-C(\kappa_1+\beta)-\frac{1}{\kappa_1}F^{ij,kl}\nabla_1h_{ij}\nabla_1h_{kl}+\frac{a}{u-a}\sum f_i\kappa_i^2\\
&+(\beta\phi'+K)\sum f_i-\frac{1}{\kappa_1^2}\sum f_i|\nabla_ih_{11}|^2+\sum f_i\frac{|\nabla_iu|^2}{(u-a)^2}.\\
\end{aligned}
\ee
By \eqref{p.11} we have for any $\epsilon>0$,
\be\label{p.17}
\frac{1}{\kappa_1^2}\sum f_i|\nabla_ih_{11}|^2\leq
(1+\epsilon^{-1})\beta^2\sum f_i|\nabla_i\Phi|^2+\frac{(1+\epsilon)}{(u-a)^2}\sum f_i|\nabla_iu|^2.
\ee
Using this in \eqref{p.16} we obtain
\be\label{p.18}
\begin{aligned}
0&\geq-C(\kappa_1+\beta)-\frac{1}{\kappa_1}F^{ij,kl}\nabla_1h_{ij}\nabla_1h_{kl}+(\frac{a}{u-a}-C\epsilon)\sum f_i\kappa_i^2\\
&+[\beta\phi'+K-C\beta^2(1+\epsilon^{-1})]T~,
\end{aligned}
\ee
where $T=\sum f_i~.$
Now we divide the remainder of the proof into two cases.

Case A. Assume $\kappa_n\leq-\frac{\kappa_1}n$.
In this case, equation \eqref{p.18} implies (here $\epsilon$ is small  controlled multiple of $a$
 and we use $f_n\geq f_i$ which holds by concavity of $f$)
\be\label{p.19}
0\geq-C(\kappa_1+\beta)+\frac{a}{C}\sum f_i\kappa_i^2-C\beta^2 T
\geq-C(\kappa_1+\beta)+(\frac1C \kappa_1^2-C\beta^2)T
\ee
Since $T\geq 1$ by the concavity of $f$,
equation \eqref{p.19} implies $\kappa_1\leq C\beta$ at $\vx_0$.

Case B. Assume $\kappa_n>-\frac{\kappa_1}n$. Let us partition $\{1, \cdots, n\}$ into 2 parts,
\[I=\{j: f_j\leq n^2f_1\}\, \mbox{and}\, J=\{j: f_j>n^2 f_1\}.\]
For $i\in I,$ we have (by \eqref{p.11}) for any $\epsilon>0 $
\be\label{p.20}
\frac{1}{\kappa_1^2}f_i|\nabla_ih_{11}|^2\leq(1+\epsilon)\sum f_i\frac{|\nabla_iu|^2}{(u-a)^2}
+C(1+\epsilon^{-1})\beta^2f_1.
\ee
Inserting this  into equation \eqref{p.16} gives (for $\epsilon$ a small controlled multiple of $a^2$)
\be\label{p.21}
\begin{aligned}
0&\geq-C(\kappa_1+\beta)-\frac{1}{\kappa_1}F^{ij,kl}\nabla_1h_{ij}\nabla_1h_{kl}+\frac{a}{C}\sum f_i\kappa_i^2+(\beta\phi'+K)\sum f_i\\
&-\frac{1}{\kappa_1^2}\sum_{i\in J}f_i|\nabla_ih_{11}|^2-C\beta^2f_1.
\end{aligned}
\ee
Now we  use an inequality due
to Andrews~\cite{Andrews94} and Gerhardt~\cite{Gerhardt96}:
\be\label{p.22}
\begin{aligned}
 - &\frac1{\kappa_1}F^{ij,kl} \nabla_1 h_{ij} \nabla_1h_{kl}
 \geq \frac1{\kappa_1} \sum_{i \neq j} \frac{f_i - f_j}{\kappa_j - \kappa_i} |\nabla_1 h_{ij}|^2\\
 &\geq \frac2{\kappa_1} \sum_{j\geq 2} \frac{f_j - f_1}{\kappa_1 - \kappa_j} |\nabla_j h_{11}|^2\\
&\geq\frac{2}{\kappa_1^2}\sum_{j\in J}f_j|\nabla_jh_{11}|^2~.
\end{aligned}
\ee
We now
insert \eqref{p.22} into \eqref{p.21} to obtain
\be\label{p.23}
0\geq-C(\kappa_1+\beta)+\frac{a}{C}\sum f_i\kappa_i^2+(\beta\phi'+K)\sum f_i-C\beta^2 f_1.
\ee
Since  $\kappa_n>-\frac1n \kappa_1$ we have that
\[\sum f_i=\frac{(n-1)\sigma_1} {2{n \choose 2}\ol{\psi}}>\frac{\kappa_1-\frac{n-1}n\kappa_1}{n\ol{\psi}}=
\frac{\kappa_1}{n^2 \ol{\psi}}\]

We also note that on $\mc{M},\, \phi'$ is bounded below by a positive controlled constant so we may assume $\beta \phi'+K$ is large. Therefore from \eqref{p.23}
we obtain
\be\label{p.24}
0\geq (\frac{\beta \phi'+K}{n^2 \ol{\psi}}-C)\kappa_1-C\beta+
(\frac{a}{C_2}\kappa_1^2-C\beta^2 )f_1.
\ee
We now fix $\beta$ large enough that $\frac{\beta \phi'+K}{n^2 \ol{\psi}}>2C$ which implies a uniform upper bound for $\kappa_1$ at $\vx_0$. By the definition of $M_0$
we then obtain a uniform upper bound for $\kappa_{max}$ on $\mc{M}$ which implies a uniform upper and lower bound for the principle curvatures.

\end{proof}

\medskip

\medskip

\section{Lower order estimates}
\label{sec2}
\setcounter{equation}{0}

In this section, we obtain $C^0$ and $C^1$ estimates for the more general equation:
\be\label{k.1}
\sigma_{k}(\kappa)=\psi(V,\,\nu),
\ee
where $k=1, \cdots, n.$
\subsection{$C^0$ estimates}

The $C^0$-estimates were proved in \cite{BLO} but  for the reader's convenience we include the simple proof.
\begin{lemma}\label{lem1}
Let $1\leq k\leq n$ and let $\psi\in C^2(N^{n+1}\times\mathbb{S}^n)$ be a positive function.
Suppose there exist two numbers $R_1$ and $R_2,$ $0<R_1<R_2<a,$ such that
\be\label{l.1}
\psi\left(V, \frac{V}{|V|}\right)\geq\sigma_k(1,\cdots, 1)q^k(\rho), \rho=R_1,
\ee
\be\label{l.2}
\psi\left(V, \frac{V}{|V|}\right)\leq\sigma_k(1,\cdots, 1)q^k(\rho), \rho=R_2,
\ee
where $q(\rho)=\frac{1}{\phi}\frac{d\phi}{d\rho}.$ Let $\rho\in C^2(\mathbb{S}^n)$ be a solution of equation \eqref{k.1}. Then
\[R_1\leq\rho\leq R_2.\]
\end{lemma}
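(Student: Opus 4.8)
The plan is to run a barrier (comparison) argument at the points where the radial function $\rho$ attains its maximum and its minimum, using the geodesic spheres $\{\rho=\text{const}\}$ as barriers. The two inequalities $\rho\le R_2$ and $\rho\ge R_1$ are established by symmetric arguments that differ only in the sign of the Hessian of $\rho$ at the extremum, so I describe the upper bound in detail and then indicate the change for the lower bound.

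Let $z_0\in\mathbb{S}^n$ be a point at which $\rho$ attains its maximum and put $\rho_0=\rho(z_0)$. At $z_0$ we have $\nabla'\rho=0$ and the Hessian $(\nabla'_{ij}\rho)\le 0$ as a bilinear form. Substituting $\nabla'\rho=0$ into \eqref{p.4}, \eqref{p.5}, \eqref{p.6} gives $g_{ij}(z_0)=\phi^2 e_{ij}$, the outward unit normal $\nu(z_0)=V/|V|$ is the radial direction (this is the point that makes the hypotheses, phrased through $\psi(V,V/|V|)$, usable), and $h_{ij}(z_0)=-\nabla'_{ij}\rho+\phi\phi' e_{ij}\ge \phi\phi' e_{ij}$. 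Hence every principal curvature satisfies $\kappa_i(z_0)\ge \phi'/\phi=q(\rho_0)>0$, so $\kappa(z_0)$ lies in the positive cone and, by monotonicity of $\sigma_k$ there,
\be\label{plan.a}
\psi\!\left(V,\tfrac{V}{|V|}\right)\Big|_{\rho=\rho_0}=\sigma_k(\kappa(z_0))\ \ge\ \sigma_k(q(\rho_0),\dots,q(\rho_0))\ =\ \sigma_k(1,\dots,1)\,q^k(\rho_0),
\ee
where the first equality is just equation \eqref{k.1} evaluated at $z_0$ together with $\nu(z_0)=V/|V|$. Comparing \eqref{plan.a} with the upper barrier assumption \eqref{l.2} then forces $\rho_0\le R_2$: if $\rho_0>R_2$ the two inequalities are incompatible (this is where \eqref{l.2} is invoked at the radius $\rho_0$, i.e.\ on the range $\rho\ge R_2$).

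For the lower bound, let $z_1$ realize $\min\rho$ and set $\rho_1=\rho(z_1)$. Now $\nabla'\rho(z_1)=0$ and $(\nabla'_{ij}\rho)(z_1)\ge 0$, so the same computation yields $\nu(z_1)=V/|V|$ and $h_{ij}(z_1)\le \phi\phi' e_{ij}$, hence $\kappa_i(z_1)\le q(\rho_1)$ for every $i$. Since $\kappa(z_1)\in\Gamma_k$ and $\sigma_k$ is monotone, $\psi(V,V/|V|)|_{\rho=\rho_1}=\sigma_k(\kappa(z_1))\le \sigma_k(1,\dots,1)\,q^k(\rho_1)$, and comparing with \eqref{l.1} gives $\rho_1\ge R_1$. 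Combining the two gives $R_1\le\rho\le R_2$.

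The computation itself is routine once \eqref{p.4}--\eqref{p.6} are in hand; what needs care is the bookkeeping at the extremum. Vanishing of $\nabla'\rho$ collapses \eqref{p.5} to the radial normal — so the $\nu$-dependence of $\psi$ is pinned down exactly as in the hypotheses — and the one-sided Hessian bound transfers, through \eqref{p.4} and \eqref{p.6}, into the one-sided eigenvalue bound $\kappa_i\ge q(\rho)$ (resp.\ $\le q(\rho)$), after which monotonicity of $\sigma_k$ on $\Gamma_k$ closes the argument. The one genuinely delicate point is that the final comparison with \eqref{l.1}--\eqref{l.2} must be legitimate at the extremal radius and not merely at $R_1$ and $R_2$; this is precisely the role played by the barrier hypotheses.
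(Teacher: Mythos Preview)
Your proof is correct and follows essentially the same approach as the paper: evaluate at an extremum of $\rho$, use $\nabla'\rho=0$ and the sign of the Hessian in \eqref{p.4}--\eqref{p.6} to bound the principal curvatures by $q(\rho)$, then invoke monotonicity of $\sigma_k$ and compare with the barrier hypotheses \eqref{l.1}--\eqref{l.2}. Your write-up is in fact more explicit than the paper's---in particular you flag the point that the barrier inequalities must be usable at the extremal radius rather than only at $R_1,R_2$, a subtlety the paper passes over silently.
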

\begin{proof} Suppose that $\max_{z\in\mathbb S^n}\rho(z)=\rho(z_0)>R_2$. Then at $z_0$,
\[g^{ij}=\phi^{-2}e^{ij},\,h_{ij}=-\nabla'_{ij} \rho+\phi\phi' e_{ij}\geq \phi \phi' e_{ij}, b_{ij}\geq q(\rho) \delta_{ij}~.\]
Hence $\psi(V,\,\nu)(z_0)=\sigma_k(b_{ij})(z_0)>q^k(R_2)\sigma_k(1,\ldots,1)$, contradicting \eqref{l.2}.
The proof of \eqref{l.1} is similar.
\end{proof}

\subsection{$C^1$ estimates}
In this section, we follow the idea of \cite{CNS} and \cite{GLM}
to derive $C^1$ estimates for the height function $\rho.$ In other words, we are looking for a lower bound
for the support function $u.$  First, we need the following technical assumption:
for any fixed unit vector $\nu,$
\be\label{l.3}
\frac{\partial}{\partial\rho}(\phi(\rho)^k\psi(V, \nu))\leq 0,\,\,\mbox{where $|V|=\phi(\rho).$}
\ee
\begin{lemma}\label{lem2}
Let $M$ be a radial graph in $N^{n+1}$ satisfying \eqref{k.1},\eqref{l.3} and let $\rho$ be the height function of $M.$
If $\rho$ has positive upper and lower bounds, then there is a constant C depending on the minimum and maximum values
of $\rho,$ such that
\[|\nabla\rho|\leq C.\]
\end{lemma}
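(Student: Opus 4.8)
The plan is to estimate the support function $u = \langle V,\nu\rangle$ from below by a maximum principle argument applied to an auxiliary function that penalizes the points where $u$ is small. First I would observe that $u>0$ everywhere because $M$ is strictly starshaped, and that a lower bound on $u$ is equivalent to the gradient bound $|\nabla'\rho|\le C$, as one sees from \eqref{p.5}: indeed $u = \langle \phi\,\partial_\rho,\nu\rangle = \phi^2/\sqrt{\phi^2+|\nabla'\rho|^2} = \phi/\sqrt{1+|\nabla'\rho|^2/\phi^2}$, so $u$ bounded below with $\phi$ bounded above and below forces $|\nabla'\rho|$ bounded. Thus it suffices to produce a positive lower bound for $\min_M u$.

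The natural test function, following \cite{CNS} and \cite{GLM}, is something of the form $W = \log u + \gamma(\rho)$ or $W = -u\, e^{\gamma(\rho)}$ for a suitable one-variable function $\gamma$ to be chosen; one takes the point $\vx_0\in M$ where $u/e^{-\gamma(\rho)}$ (equivalently $W$) attains its minimum, chooses an orthonormal frame diagonalizing $h_{ij}$ there, and writes the first- and second-order conditions $\nabla_i W = 0$, $(\nabla_{ij}W)\ge 0$ (as a nonnegative matrix). The key inputs are the formulas \eqref{p.7}--\eqref{p.9}: from \eqref{p.8}, $\nabla_i u = h_{ik}\nabla_k\Phi$, and from \eqref{p.9}, $\nabla_{ij}u = \nabla_k h_{ij}\,\nabla_k\Phi + \phi' h_{ij} - u\, h_{ik}h_{kj}$. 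Contracting the second-derivative inequality against $F^{ij} = \partial F/\partial h_{ij}$ (or against $\partial\sigma_k/\partial h_{ij}$, which is positive definite on $\Gamma_k$), the third-derivative term $F^{ij}\nabla_k h_{ij}\nabla_k\Phi$ gets replaced, via the differentiated equation \eqref{p.1'} (its $\sigma_k$ analogue), by terms that are controlled by $\psi$, $\psi_V$, $\psi_\nu$ and first derivatives of $\rho$ — this is exactly where hypothesis \eqref{l.3} enters. The quadratic term $-u\, F^{ij}h_{ik}h_{kj}$ is good (it has a favorable sign and, by the $C^2$-type homogeneity of $\sigma_k$, dominates), while the term $\phi' F^{ij}h_{ij} = \phi' \cdot k\,\sigma_k = k\,\phi'\psi$ is lower order; the role of $\gamma(\rho)$ is to generate, through $\nabla_{ij}\Phi = \phi' g_{ij} - u h_{ij}$, a term $\gamma'(\rho)(\phi' \sum f_i - u\,\sigma_1\text{-type})$ whose sign can be arranged to absorb the remaining error terms once $\gamma$ is chosen with $\gamma'$ large and of the correct sign — the monotonicity \eqref{l.3} of $\phi^k\psi$ in $\rho$ is precisely what makes this choice possible.

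Concretely I would: (1) set up $W$, locate its minimum $\vx_0$, diagonalize, and record the two derivative conditions; (2) substitute \eqref{p.8}--\eqref{p.9} into the contracted inequality $F^{ij}\nabla_{ij}W\ge 0$; (3) use the once-differentiated equation to eliminate $F^{ij}\nabla_k h_{ij}$ in favor of terms bounded by $C(1+|\nabla\rho|)$, inserting the first-order relation $\nabla_i u = h_{ik}\nabla_k\Phi$ to rewrite $\nabla_i W$; (4) collect terms: the $-u F^{ij}h_{ik}h_{kj}$ term and the $\gamma'$-generated term on one side, the bounded remainder and the $\phi'\psi$ term on the other, and choose $\gamma$ (using \eqref{l.3}) so that the inequality forces $u(\vx_0)\ge c>0$; (5) conclude $\min_M u \ge c$ and translate back to $|\nabla'\rho|\le C$ via \eqref{p.5}.

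The main obstacle I expect is step (4): arranging the precise algebraic cancellation so that the "bad" terms (those involving $\nabla\rho$ to the first power, and the cross terms from differentiating $\psi(V,\nu)$ in the $\nu$-variable, which produce $h_{ij}$-dependence and hence terms that are not obviously lower order) are genuinely absorbed by the good quadratic term and the $\gamma'$ term, uniformly in $k$ and with constants depending only on the allowed data. The $\nu$-dependence of $\psi$ is the delicate point — it is exactly the feature that made the $C^2$ estimate hard — but at the $C^1$ level it only contributes terms linear in the second fundamental form contracted with $F^{ij}$, which after using the first-order condition and Cauchy–Schwarz can be dominated by a small multiple of $u\,F^{ij}h_{ik}h_{kj}$ plus a controlled constant; making this rigorous, together with the correct sign bookkeeping in \eqref{p.7}, is the crux.
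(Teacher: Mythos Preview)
Your overall strategy matches the paper's: the test function $-\log u+\gamma(\Phi)$, the maximum-principle computation at the extremal point using \eqref{p.7}--\eqref{p.9}, contraction against $\sigma_k^{ii}$, and the once-differentiated equation. Two points in your sketch, however, misidentify where the work actually happens.

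First, the dominant good term is not the quadratic $\sigma_k^{ii}h_{ii}^2$; it is $[(\gamma')^2+\gamma'']\,\langle V,e_1\rangle^2\,\sigma_k^{11}$. The paper chooses the frame so that the tangential part of $V$ lies along $e_1$ (and only then diagonalizes the remaining block); the first-order condition with \eqref{p.8} forces $h_{11}=u\gamma'$ and $h_{i1}=0$. Taking $\gamma(t)=\alpha/t$ gives $h_{11}<0$, which yields the crucial lower bound $\sigma_k^{11}=\sigma_{k-1}(\kappa\,|\,\kappa_1)\ge\sigma_{k-1}\ge\psi^{(k-1)/k}$. The contradiction is then $0\ge C\alpha^2\psi^{(k-1)/k}-O(\alpha)$ for $\alpha$ large. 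In particular the $\psi_\nu$-contribution is not absorbed by Cauchy--Schwarz into the quadratic term: after the first-order condition it is exactly $h_{11}\,d_\nu\psi(e_1)=u\gamma'\,d_\nu\psi(e_1)=O(\alpha)$, beaten directly by the $O(\alpha^2)$ term.

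Second, hypothesis \eqref{l.3} is not used to ``choose $\gamma$''; it is used to kill a genuine $1/u$ singularity. The contracted second-order inequality contains
\[
-\frac{1}{u}\Bigl[\langle V,e_1\rangle\,d_V\psi(\nabla_{e_1}V)+k\phi'\psi\Bigr],
\]
where the $k\phi'\psi$ comes from $-\tfrac1u\phi'\sigma_k^{ii}h_{ii}=-\tfrac1u k\phi'\psi$. Writing $V=\langle V,e_1\rangle e_1+u\,\nu$ and invoking $k\phi'\psi+d_V\psi(V)\le0$ (which is \eqref{l.3}) converts this into the bounded quantity $d_V\psi(\nabla_\nu V)$; the $1/u$ is gone. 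Your step~(3) assertion that the differentiated-equation terms are ``bounded by $C(1+|\nabla\rho|)$'' is only true \emph{after} this cancellation --- before it, the $1/u$ factor is precisely the obstruction to the lower bound you seek.
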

\begin{proof}
Consider $h=-\log u+\gamma(\Phi(\rho))$ and suppose $h$ achieves it's maximum at $z_0.$ We will show that for a suitable choice of $\gamma(t),\, u(z_0)=|V(z_0)|$, that is $V(z_0)=|V(z_0)|\nu(z_0)$, which implies a uniform lower bound for $u$ on $M$.
If not, we can choose a local orthonormal
frame $\{e_1,\cdots, e_n\}$ on M such that $\left<V, e_1\right>\neq 0,$ and $\left<V, e_i\right>=0,\, i\geq 2.$
Then at $z_0$ we have,
\be\label{l.4}
h_i=\frac{-u_i}{u}+\gamma'\nabla_i\Phi=0,
\ee
\be\label{l.5}
\begin{aligned}
&0\geq h_{ii}=\frac{-u_{ii}}{u}+\left(\frac{u_i}{u}\right)^2+\gamma'\nabla_{ii}\Phi
+\gamma''(\nabla_i\Phi)^2\\
&=\frac{-1}{u}\left(h_{ii1}\nabla_1\Phi+\phi'h_{ii}-uh_{ii}^2\right)
+[(\gamma')^2+\gamma''](\nabla_i\Phi)^2+\gamma'(\phi'g_{ii}-h_{ii}u).\\
\end{aligned}
\ee
Equation \eqref{l.4} gives
\be\label{l.8}
h_{11}=u\gamma',\, h_{i1}=0, \,i\geq 2
\ee
so we may rotate $\{e_2,\cdots, e_n\}$ so that ${h_{ij}}(z_0,\rho(z_0)$ is diagonal.
Hence,
\be\label{l.6}
\begin{aligned}
0&\geq\frac{-1}{u}\left(\sigma_k^{ii}h_{ii1}\nabla_1\Phi+\phi'k\psi-u\sigma_k^{ii}h_{ii}^2\right)\\
&+[(\gamma')^2+\gamma''](\nabla_1\Phi)^2\sigma^{11}+\gamma'(\phi'\sum \sigma_k^{ii}-k\psi u)\\
\end{aligned}
\ee
Differentiating equation \eqref{k.1} with respect to $e_1$ we obtain
\be\label{l.7}
\sigma_k^{ii}h_{ii1}=d_V\psi(\nabla_{e_1}V)+h_{11}d_{\nu}\psi(e_1).
\ee

Substituting equation \eqref{l.7} and \eqref{l.8} into \eqref{l.6} yields
\be\label{l.9}
\begin{aligned}
0&\geq \frac{-1}{u}[\li V, e_1\ri d_V\psi(\nabla_{e_1}V)+u\gamma'\li V, e_1\ri d_{\nu}\psi(e_1)+k\phi'\psi]\\
&+\sigma_k^{ii}h_{ii}^2+[(\gamma')^2+\gamma'']\li V, e_1\ri^2\sigma_k^{11}+\gamma'\phi'\sigma_k^{ii}-ku\gamma'\psi\\
&=\frac{-1}{u}[\li V, e_1\ri d_V\psi(\nabla_{e_1}V)+k\phi'\psi]+\sigma_k^{ii}h_{ii}^2\\
&+[(\gamma')^2+\gamma'']\li V, e_1\ri^2\sigma_k^{11}+\gamma'\phi' \sum \sigma_k^{ii}-u\gamma'\psi-\gamma'\li V, e_1\ri d_{\nu}\psi(e_1).
\end{aligned}
\ee
Our assumption \eqref{l.3} is equivalent to
\be\label{l.10}
k\phi^{k-1}\phi'\psi+\phi^{k}\frac{\partial}{\partial\rho}\psi(V, \nu)\leq 0,
\ee
or
\be\label{l.11}
k\phi'\psi+d_V\psi(V, \nu)\leq 0.
\ee
Since at $z_0,$ $V=\li V, e_1\ri e_1+\li V, \nu\ri\nu$
\be\label{l.12}
d_V\psi(V, \nu)=\li V, e_1\ri d_V\psi(\nabla_{e_1}V)+\li V, \nu\ri d_V\psi(\nabla_{\nu}V).
\ee
Therefore,
\be\label{l.13}
\begin{aligned}
0&\geq\sigma_k^{ii}h_{ii}^2+[(\gamma')^2+\gamma'']\li V, e_1\ri^2\sigma_k^{11}+\gamma'\phi' \sum \sigma_k^{ii}\\
&-u\gamma'\psi-\gamma'\li V, e_1\ri d_{\nu}\psi(e_1)+d_V\psi(\nabla_{\nu}V)
\end{aligned}
\ee
Now let $\gamma(t)=\frac{\alpha}{t},$ where $\alpha>0$ is sufficiently large.
Since $h_{11}\leq 0$ at $z_0,$ and $\sum \sigma_k^{ii}=(n-k+1)\sigma_{k-1}$,
we have that
\be\label{l.15}
\sigma_k^{11}=\sigma_{k-1}(\kappa|\kappa_1)\geq \sigma_{k-1}\geq\sigma_k^{\frac{k-1}{k}}=\psi^{\frac{k-1}{k}}.
\ee

Therefore
\be\label{l.14}
[(\gamma')^2+\gamma'']\li V, e_1\ri^2\sigma_k^{11}+\sigma_{k}^{ii}h_{ii}^2+\gamma'\phi' \sum\sigma_k^{ii}
\geq C\alpha^2\sigma_k^{11},
\ee
for some $C$ depending on $|\rho|_{C^0}.$

We conclude that
\be\label{l.16}
0\geq C\alpha^2\psi^{\frac{k-1}{k}}-\alpha|V||d_{\nu}\psi(e_1)|-|d_V\psi(\nabla_{\nu}V)|,
\ee
which leads to a contradiction when $\alpha$ is large. Therefore at $z_0$ we have $u=|V|,$ which completes the proof.
\end{proof}

By a standard continuity argument (see \cite{CNS}), we can prove the following theorem.
\begin{theorem}\label{thm0}
Suppose $\psi\in C^2(\bar{B}_{r_2}\setminus B_{r_1}\times\mathbb{S}^n)$ satisfies
conditions \eqref{l.1}, \eqref{l.2}, and \eqref{l.3}. Then there exists a unique $C^{3,\alpha}$
starshaped solution $\mathcal{M}$ satisfying equation \eqref{p.2}.
\end{theorem}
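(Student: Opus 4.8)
The plan is to reduce Theorem \ref{thm0} to the a priori estimates already established, namely Lemma \ref{lem1} (the $C^0$ bound), Lemma \ref{lem2} (the $C^1$ bound), and Theorem \ref{pt1} (the $C^2$ bound), and then to run the classical degree-theory / continuity method of \cite{CNS}. First I would recast equation \eqref{p.2} as a fully nonlinear second-order elliptic equation for the radial function $\rho$ on $\mathbb S^n$, using the formulas \eqref{p.4}--\eqref{p.6} to express the matrix $b_{ij}=\gamma^{ik}h_{kl}\gamma^{lj}$ in terms of $\rho$, $\nabla'\rho$, $\nabla'^2\rho$; admissibility ($\lambda(b)\in\Gamma_2$) makes $F(b)=\binom n2^{-1/2}\sigma_2^{1/2}$ concave and elliptic on the admissible cone, so the equation $F(b)=\ol\psi(V,\nu)$ is of the type to which Evans--Krylov and Schauder theory apply.

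Next I would set up the continuity path. A natural choice is to connect $\ol\psi$ to a constant (or to $q(\rho)$-type) right-hand side for which an explicit admissible solution, e.g. a geodesic sphere, is known; write $\ol\psi_t=(1-t)\ol\psi_0+t\ol\psi$, arranging that each $\ol\psi_t$ still satisfies the structural hypotheses \eqref{l.1}, \eqref{l.2}, \eqref{l.3} (these conditions are convex in $\psi$, or can be made so after normalizing, which is why they were isolated in exactly this form). Let $\mc S$ be the set of $t\in[0,1]$ for which \eqref{p.2} with $\ol\psi_t$ has a $2$-admissible starshaped $C^{3,\alpha}$ solution. Openness of $\mc S$ follows from the implicit function theorem: linearizing $F(b)-\ol\psi_t$ gives a second-order elliptic operator, and its invertibility on $C^{2,\alpha}(\mathbb S^n)$ is where one needs the linearized operator to have trivial kernel — this is typically guaranteed by a maximum-principle argument using that $\ol\psi$ is nonincreasing in $\rho$ in the sense of \eqref{l.3}, so I would include that as the one nontrivial ingredient for openness.

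For closedness of $\mc S$, I would invoke the a priori estimates: Lemma \ref{lem1} gives $R_1\le\rho\le R_2$ uniformly in $t$ (using \eqref{l.1}, \eqref{l.2} for $\ol\psi_t$); Lemma \ref{lem2} then gives $|\nabla\rho|\le C$ uniformly (using \eqref{l.3}); Theorem \ref{pt1} then gives the two-sided bound $|\kappa_i|\le C$ on the principal curvatures, which translates into a bound $|\rho|_{C^2}\le C$ and, crucially, uniform ellipticity of the equation along the path. With uniform ellipticity and concavity in hand, the Evans--Krylov theorem upgrades this to a uniform $C^{2,\alpha}$ bound, and the linear Schauder estimates give a uniform $C^{3,\alpha}$ bound; Arzelà--Ascoli then closes $\mc S$. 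Since $0\in\mc S$ and $\mc S$ is nonempty, open, and closed in $[0,1]$, we get $1\in\mc S$, i.e. existence. Uniqueness follows from a comparison/maximum-principle argument exploiting \eqref{l.3}: if $\rho_1,\rho_2$ are two solutions, examine the maximum of $\rho_1-\rho_2$ and use the monotonicity of $\ol\psi$ in the radial direction together with ellipticity to force $\rho_1\equiv\rho_2$.

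The main obstacle I anticipate is not any single estimate — those are cited — but verifying that the structural conditions \eqref{l.1}, \eqref{l.2}, \eqref{l.3} are preserved along a continuity path connecting $\ol\psi$ to a solvable model, and simultaneously that the linearized operator is invertible (trivial kernel) at every $t$; both hinge on the sign condition \eqref{l.3}, and getting a clean path that keeps \eqref{l.3} while landing on an explicitly solvable endpoint is the delicate bookkeeping step. Everything else is the standard \cite{CNS} machinery once the three a priori estimates of this paper are granted.
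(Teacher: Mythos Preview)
Your proposal is correct and matches the paper's approach: the paper itself gives no detailed proof of Theorem~\ref{thm0}, merely stating that it follows ``by a standard continuity argument (see \cite{CNS})'' once the a priori estimates (Lemma~\ref{lem1}, Lemma~\ref{lem2}, Theorem~\ref{pt1}) are in hand. You have accurately fleshed out exactly this standard machinery --- continuity path, openness via the implicit function theorem, closedness via the estimates plus Evans--Krylov and Schauder, uniqueness via the maximum principle and \eqref{l.3} --- and correctly identified the bookkeeping around the path as the only place requiring care.
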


\bigskip

\end{document}